\newcommand{\f}{\frac}
\newcommand{\p}{\partial}
\newtheorem{prop}{Proposition}
\DeclareMathOperator{\R}{{\mathbb R}}
\DeclareMathOperator{\N}{{\mathbb N}}
\DeclareMathOperator{\Z}{{\mathbb Z}}
\begin{document}

\title{Explicit solution and fine asymptotics for a critical growth-fragmentation equation}
\author{Marie Doumic\thanks{Sorbonne Universités, Inria, UPMC Univ Paris 06, Lab. J.L. Lions  UMR CNRS 7598, Paris, France} \thanks{Wolfgang Pauli Institute, University of Vienna, Vienna, Austria, marie.doumic@inria.fr}
\and Bruce van Brunt\thanks{Institute of Fundamental Sciences, Massey University, New Zealand,  B.vanBrunt@massey.ac.nz}}
%
%

%
\maketitle

\begin{abstract} We give here an explicit formula for the following critical case of the growth-fragmentation equation
$$\f{\p}{\p t} u(t,x)  +  \f{\p}{\p x} (g x u(t,x))+bu(t,x)=b \alpha^2  u(t,\alpha x),\qquad u(0,x)=u^0(x),$$
for some constants $g>0,$ $b>0$ and $\alpha>1$ - the case $\alpha=2$ being the emblematic binary fission case.
 We discuss the links between this formula and the asymptotic ones previously obtained in~\cite{DE}, and use them to clarify how periodicity may appear asymptotically. 
 \end{abstract}

\section*{Introduction}
Growth-fragmentation equations appear in many applications, ranging from protein polymerisation to internet protocols or cell division equation. Under a fairly general form it may be written as follows
$$\f{\p}{\p t} u(t,x) + \f{\p}{\p x} \big(g (x) u (t,x)\big) + B(x)u(t,x)=\int\limits_x^\infty k(y,x) B(y) u(t,y) dy,
$$
where $u(t,x)$ represents the concentration of individuals of size $x$ at time $t,$ $g$ their growth speed, $B$ the total instantaneous fragmentation probability rate and $k(y,x)$ the fragmentation probability of fragmenting individuals of size $y$ to give rise to individuals of size $x.$ Under assumptions linking fragmentation and growth parameters $B,$ $k$ and $g$, a steady asymptotic behaviour appears, \emph{i.e.} there exists a unique couple $(\lambda,U)$ with $\lambda>0$ such that $u(t,x) e^{-\lambda t} \to U(x)$ - see for instance the pioneering papers~\cite{DiekmannHeijmansThieme1984,HallWake_1989},~\cite{BP} for an introduction and many other references like~\cite{Zaidi_2015,Der4,Zai} for some most recent ones. {\color{black} This asymptotic behaviour is a key property of many models in the field of structured population dynamics, and in many cases such as bacterial growth it is experimentally observed~\cite{robert:hal-00981312} (the biologists speak of "desyncrhonization effect")}.However, such a steady behaviour may also fail for two types of reasons:
\begin{enumerate}
\item the balance assumptions between $B,$ $g$ and $k$ are not satisfied,
\item Growth and fragmentation are such that there is a lack of dissipativity in the equation.
This is typically the case when the growth is exponential, \emph{i.e.} $g(x)=gx,$ and the fragmentation is a dirac, $k(y,x)=\alpha\delta_{\f{x}{y}=\f{1}{\alpha}}$  with $\alpha>1.$ In such a case, if the division rate $B$ is such that
there exists a  positive couple $(\lambda,U)$, there also exists a countable set of complex couples of the form $(\lambda+i\theta_k, U_k),$ which leads to a periodic limit cycle, see~\cite{GreinerNagel,bernard:hal-01363549}. 
\end{enumerate}
We focus here on a critical case where both these reasons appear, namely $B(x)\equiv b>0$ - also called "homogeneous fragmentation" - $g(x)\equiv x g$ and $k(y,x)=\alpha\delta_{\f{x}{y}=\f{1}{\alpha}},$ which generalises the binary fission case $\alpha=2$. The equation under study is thus
 \begin{equation}\label{eq:main}
\f{\p}{\p t} u(t,x)  +  \f{\p}{\p x} (g x u(t,x))+bu(t,x)=b \alpha^2  u(t,\alpha x),\qquad u(0,x)=u^0(x).
\end{equation}
This is a specific case of the homogeneous fragmentation equation studied in~~\cite{MR2017852,BertoinWatson,DE}. { It may also be seen as an emblematic case when modelling bacterial growth, since the exponential growth in size of bacteria has been observed, together with equal mitosis ($\alpha=2$). A constant division rate would then correspond to a growth independent of the size. However, the behaviour that we study in this paper is barely observed in nature, since a tiny variability in the coefficient rates or in the fragmentation kernel is sufficient to drive the system towards a steady asymptotic growth. It is thus important for modellers to include such a slight variability rather than using directly the idealised model under study.} The main results obtained in~\cite{DE} were the following:
\begin{itemize}
\item a formulation in terms of Mellin and inverse Mellin transform was obtained, as soon as the initial condition $u_0$ decays sufficiently fast in $0$ and $\infty$,
\item  no steady or self-similar behaviour was possible for $L^1$ functions,
\item the asymptotic behaviour was described along lines of the type $x=e^{-ct},$ with an exponential speed of convergence at places where the mass was decaying, but with at most polynomial growth for the lines where the mass concentrates,
\item in the case of a fragmentation kernel defined as a dirac mass (or a sum of dirac masses linked by a specific algebraic relation), the asymptotic behaviour was also defined thanks to the Mellin transform, but was more involved, with an infinite sum of contributions and a still slower polynomial rate of convergence.
\end{itemize}

Despite these  results, a question remains unclear: can we observe a kind of "oscillatory" behaviour, as in the case of a limit cycle~\cite{GreinerNagel,bernard:hal-01363549}?

In Proposition~\ref{prop:explicit}, we first provide an explicit solution of Equation~\eqref{eq:main} and discuss its interpretation, in particular in terms of possible periodicity. In Section~\ref{sec:asymp}, we investigate in more detail the asymptotic behaviour, based on the estimates obtained in~\cite{DE} and with the help of a rescaling inspired by~\cite{MR2017852}. 

\section{An explicit formulation}
Explicit formulations may be obtained, as said above, via the Mellin transform of the equation - the Mellin transform has also been used in other studies for the qualitative behaviour of solutions of equations of the same type, see for instance~\cite{BDE,vanbrunt_wake_2011,DET2017,Escobedo2016,Escobedo2017}.  Analytical solutions for specific cases of the eigenvalue problem have also been given in some studies, see e.g.~\cite{HallWake_1990,PR,DG} for some examples. 

Else, obtaining analytical solutions for the time-dependent equation is not frequent - let us mention~\cite{Stewart90,ZiffGrady} for the fragmentation equation, and~\cite{Zai} for the full analytical solution to the cell division equation with constant coefficients. Up to our knowledge, the following explicit solution for our case was not known.
\begin{prop}\label{prop:explicit}
Let ${\cal S}'(\R_+)$ the space of distribution functions (dual space of ${\cal S}(\R_+)$ the Schwartz space on $\R_+$) and  $u_0\in {\cal S}'(\R_+)$. The distribution defined in a weak sense by
\begin{equation}\label{def:u}
u(t,x)=e^{-(b+g)t} \sum\limits_{k=0}^\infty u_0(\alpha^k x e^{-gt}) \f{(b\alpha^2 t)^k}{k!},\qquad t>0,\quad x>0,
\end{equation}
 is solution to Equation~\eqref{eq:main}. Moreover, if $u_0\in L^p(x^qdx)$ then $u \in L^\infty(0,T;L^p(x^q dx))$ for any $T>0,$ $p\in [1,\infty]$ and $q\in \R.$ Similarly, if $u_0\in {\cal M}_+^b (x^qdx)$ the space of  nonnegative bounded measures absolutely continuous with respect to the measure $x^q dx,$ $u \in L^\infty(0,T;{\cal M}_+^b (x^qdx)).$ 
\end{prop}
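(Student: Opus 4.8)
The plan is to take \eqref{def:u} as an ansatz and verify it directly, term by term, and then read the integrability statements off the explicit series. Write $f_k(t,x):=u_0(\alpha^kxe^{-gt})\,(b\alpha^2t)^k/k!$, so that $u=e^{-(b+g)t}\sum_{k\ge0}f_k$. \emph{Step 1: the series defines a distribution for each $t>0$.} Pairing with $\varphi\in{\cal S}(\R_+)$ and using the scaling identity $\langle u_0(\alpha^k\,\cdot\,e^{-gt}),\varphi\rangle=\alpha^{-k}e^{gt}\,\langle u_0,\varphi(\alpha^{-k}e^{gt}\,\cdot\,)\rangle$ reduces convergence to controlling $\langle u_0,\varphi(\alpha^{-k}e^{gt}\,\cdot\,)\rangle$. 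Since $u_0$ is tempered there are $C,N$ with $|\langle u_0,\psi\rangle|\le C\sum_{a,b\le N}\sup_x x^a|\psi^{(b)}(x)|$; applied to $\psi=\varphi(\alpha^{-k}e^{gt}\,\cdot\,)$, whose seminorms scale like $(\alpha^{-k}e^{gt})^{b-a}$, this is $O(\alpha^{kN})$ as $k\to\infty$, so the $k$-th term of $\langle u(t,\cdot),\varphi\rangle$ is $O\big((b\alpha^{N+1}t)^k/k!\big)$, hence the series converges absolutely, locally uniformly in $t>0$. The same bound applied to $\varphi$ replaced by its derivatives shows $t\mapsto\langle u(t,\cdot),\varphi\rangle$ is smooth on $(0,\infty)$, and continuous up to $t=0^+$ with $u(0,\cdot)=u_0$ since only the $k=0$ term survives at $t=0$.

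\emph{Step 2: it solves \eqref{eq:main}.} In the weak formulation one may differentiate the series termwise in $t$ and $x$, the differentiated series being controlled by exactly the same estimate. The key cancellation is that each dilated copy $x\mapsto u_0(\alpha^kxe^{-gt})$ is transported along the characteristics $x=x_0e^{gt}$, i.e. $\p_t\big(u_0(\alpha^kxe^{-gt})\big)+gx\,\p_x\big(u_0(\alpha^kxe^{-gt})\big)=0$ (the two "$u_0'$" contributions cancel), which one checks directly against test functions. Hence $\p_tf_k+\p_x(gxf_k)=gf_k+b\alpha^2\,u_0(\alpha^kxe^{-gt})(b\alpha^2t)^{k-1}/(k-1)!$ for $k\ge1$ and $=gf_0$ for $k=0$. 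Multiplying by $e^{-(b+g)t}$ and summing, the contributions $-(b+g)\sum f_k$ (from $\p_te^{-(b+g)t}$), $g\sum f_k$ and $b\sum f_k$ cancel, and the index shift $k\mapsto k+1$ in the remaining sum gives exactly $b\alpha^2e^{-(b+g)t}\sum_{k\ge0}u_0(\alpha^k(\alpha x)e^{-gt})(b\alpha^2t)^k/k!=b\alpha^2u(t,\alpha x)$, as required. (Structurally this is transparent: the substitution $u(t,x)=e^{-gt}v(t,xe^{-gt})$ removes the transport term and turns \eqref{eq:main} into the bounded linear ODE $\p_tv+bv=b\alpha^2v(t,\alpha\,\cdot\,)$, solved by $v(t)=e^{t(b\alpha^2S-b\,\mathrm{Id})}u_0$ with $(Sf)(y)=f(\alpha y)$; expanding the exponential yields \eqref{def:u}, and since $S$ is bounded on each $L^p(x^qdx)$ this route also gives Step 3 at once.)

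\emph{Step 3: integrability.} Here the explicit formula does everything. The change of variable $y=\alpha^kxe^{-gt}$ gives, for $p\in[1,\infty)$, $\|u_0(\alpha^k\,\cdot\,e^{-gt})\|_{L^p(x^qdx)}=(\alpha^{-k}e^{gt})^{(q+1)/p}\|u_0\|_{L^p(x^qdx)}$, so the triangle inequality yields
\begin{equation*}
\|u(t,\cdot)\|_{L^p(x^qdx)}\le e^{-(b+g)t}\,e^{gt(q+1)/p}\,\exp\!\big(b\alpha^{2-(q+1)/p}t\big)\,\|u_0\|_{L^p(x^qdx)},
\end{equation*}
which is finite and continuous in $t$, hence bounded on $[0,T]$. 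For $p=\infty$ the dilation acts as an isometry, the remaining coefficients sum to $e^{b\alpha^2t}$, and the same conclusion holds. Finally, if $u_0\in{\cal M}_+^b(x^qdx)$ then each $u_0(\alpha^kxe^{-gt})$ is again a nonnegative measure absolutely continuous with respect to $x^qdx$, so $u(t,\cdot)$ is too, and its total mass is controlled by the $p=1$ estimate above, giving $u\in L^\infty(0,T;{\cal M}_+^b(x^qdx))$.

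\emph{Main obstacle.} Everything becomes elementary once the series is manipulated term by term; the only point requiring care is the justification in Steps 1--2 that termwise differentiation and re-summation are legitimate for a mere distribution $u_0\in{\cal S}'(\R_+)$. This rests on the interplay between the polynomial amplification $\alpha^{kN}$ coming from the order $N$ of $u_0$ and the super-exponential decay $1/k!$ of the coefficients, and it is precisely this mechanism that makes any decay hypothesis on $u_0$ — needed for the Mellin approach of \cite{DE} — unnecessary here.
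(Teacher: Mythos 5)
Your proof is correct and takes essentially the same route as the paper — direct verification of the series formula by term-by-term differentiation and change of variables, with convergence guaranteed by the $s^k/k!$ decay — but fills in all the details that the paper's two-sentence sketch leaves implicit, in particular the seminorm estimate controlling $\langle u_0,\varphi(\alpha^{-k}e^{gt}\,\cdot\,)\rangle$ against the polynomial growth $\alpha^{kN}$, and the explicit $L^p(x^qdx)$ bound $e^{(-(b+g)+g(q+1)/p+b\alpha^{2-(q+1)/p})t}\|u_0\|$. The parenthetical observation at the end of Step 2 — that after removing the transport via $u(t,x)=e^{-gt}v(t,xe^{-gt})$ the equation becomes $\p_t v=(b\alpha^2 S-b\,\mathrm{Id})v$ with $S$ the bounded dilation operator, so that \eqref{def:u} is just the operator exponential $e^{t(b\alpha^2 S-b\,\mathrm{Id})}u_0$ — is a clean structural explanation not stated in the paper's proof, though it is implicitly the content of the change of variables used in Section 2; it also gives the integrability claims for free since $S$ is bounded on each $L^p(x^qdx)$.
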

\begin{proof}
The spaces to which $u(t,x)$ belongs to are immediate by using the definition~\eqref{def:u}, multiplying it by the convenient weight or test function, and make a term-by-term change of variables $y=\alpha^k x e^{-gt};$ it is linked to the fast convergence of the terms $\f{s^k}{k!}$ defining the series of the exponential.
The proof that $u(t,x)$ satisfies Equation~\eqref{eq:main} in a weak sense can be done similarly, by multiplying the equation applied to $u(t,x)$ by a test function, integrating by parts and making a change of variables.
\end{proof}
This formula makes directly appear several interesting features, linked to the very specific shape of the fragmentation kernel $k_0(z)=\alpha\delta_{z=\f{1}{\alpha}}.$ 
\begin{itemize}
\item At time $t=0^+,$ there is an immediate appearance of contributions to the distribution in $x$ of all the points $\alpha^k x$ with $k\geq 0.$ This can be interpreted in terms of division: without growth, cells of size $x$ contained in an interval $[x,x+dx]$ may come from $k$ times the division of cells of size $\alpha^k x,$ contained in the interval $[\alpha^kx, \alpha^k (x+dx)]$, this division producing $\alpha^k$ cells of size $x$ so that there is a factor $\alpha^{2k}$ in the contribution coming from the division of size $[\alpha^k x, \alpha^k x +\alpha^k dx]$ particles. Now, the probability of $k$ successive division of given cells of size $\alpha^k x$ in a time interval $\delta t$ is proportional to $\f{b^k (\delta t)^k}{k!},$ the product of $k$ times the probability for a cell to divide taken among an infinite possibility of divisions, and then renormalised by $e^{-b\delta t}$ to obtain a total probability equal to $1$ on all the possibilities to divide $0,$ $1,$ $\dots$ $k$ times. When time passes, this remains true, and the formula follows the characteristic lines $x e^{gt}: $ without the birth term on the right-hand side, the solution of the equation
$$\f{\p}{\p t} u +\f{\p}{\p x} (g x u(t,x))+bu(t,x)=0$$
would be $u(t,x)=e^{-(b+g)t}u_0(xe^{-gt}),$ the first term of the series; on the contrary, without the growth and the left-hand side division term, the equation
$$\f{\p}{\p t} u =b \alpha^2  u(t,\alpha x),\qquad u(0,x)=u^0(x)$$
would admit for solution $ \sum\limits_{k=0}^\infty u_0(\alpha^k x) \f{(b\alpha^2 t)^k}{k!}$ for $t>0$ and $x>0.$

\item Contrarily to other cases (with a smoother fragmentation kernel or a non-linear growth rate), we see that the equation has no smoothing effect. Taking for instance the case of a dirac initial data $u_0(x)=\delta_{x_0},$ we see that as expected intuitively the mass is permanently supported by a countable set of dirac masses, taking values along characteristic lines $x=\alpha^{-k}  x_0 e^{gt}$ representing, for $k=0,$ the ancestor characteristic curve, and for $k\geq 1,$ the characteristic line of the $k-th$ generation of offspring (individuals having divided $k$ times at time $t$).

\end{itemize}

Despite its simple formulation, the analytical formula~\eqref{def:u} does not lead directly to an easy  asymptotic behaviour. This was also the case with the formulation obtained in~\cite{DE} using  Mellin and inverse Mellin transform: a complete asymptotic analysis of the complex integral was necessary to obtain an asymptotic behaviour. 

However, an important  clue is given to the question of possible oscillations  when looking at the solution obtained for $u_0=\delta_{x_0}:$ the mass is permanently supported in the countable set of dirac at points $x=\alpha^{-k}e^{gt}x_0$ for $k\in \N$, so that at each period of time $t=nT$ such that $e^{gT}=\alpha,$ and only at them, the dirac masses come back to the points $x=\alpha^{-k+n}x_0$. This set tends to the countable set $x=\alpha^k x_0$ with $k\in \Z.$ But can we say more concerning the mass of each of these points? 

In a more general manner, if $supp (u_0)\subset [x_0,x_1]$, at any time one has $supp (u(t,\cdot))\subset \cup_{k\in\N} [2^{-k} x_0e^{gt},2^{-k} x_1 e^{gt}]$ for $k\in \N:$ if for instance $\f{1}{2}<x_0<x_1\leq 1,$ we have $supp\left(u(t,\cdot)\right) \cap \cup_{k\in\N} (2^{-k-1}e^{gt},2^{-k}x_0e^{gt})=\emptyset.$  It is thus clear that no \emph{pointwise} limit toward a steady behaviour is possible.

\section{Asymptotic behaviour}
\label{sec:asymp}

\subsection{Asymptotics via the Mellin transform~\cite{DE}}
Let us here assume $g=0$ and $b=1,$ denote $v(t,x)$ the corresponding solution of the pure fragmentation equation, we know that $u(t,x)=e^{-gt} v(bt,xe^{-gt})$, and Formula~\eqref{def:u} becomes
\begin{equation}\label{def:frag}
v(t,x)=e^{-t}\sum\limits_{k=0}^\infty u_0(\alpha^k x) \f{(\alpha^2 t)^k}{k!}.
\end{equation}
In~\cite{DE}, another explicit formula was obtained using Mellin and inverse Mellin transform.
For the sake of simplicity, we restrict ourselves to $u_0 \in {\cal C}^2_0 (\R_+),$ the space of two-times differentiable functions on $R_+$ decaying faster than any power law in $0$ and $+\infty$ (see Theorem~3.1. in~\cite{DE} for more general assumptions).

Define
\begin{equation}\label{def:Mellin}
K(s)=\int\limits_0^1x^{s-1} \alpha\delta_{x=\f{1}{\alpha}} dx=\alpha^{2-s},\qquad U_0(s)=\int\limits_0^\infty u_0(x)x^{s-1}dx,
\end{equation}
we have (Theorem~3.1. in~\cite{DE}), for 
$g=0$ and $b=1,$ and any $\nu\in \R:$
\begin{equation}\label{eq:Mellin}
v(t, x)=\frac {1} {2\pi i}\int \limits _{ \nu-i\infty }^{ \nu+i\infty }U_0(s)\,e^{(K(s)-1)t}x^{-s}ds.
\end{equation}
{ Following the notations of~\cite{DE}, the fragmentation kernel is in our case equal to $k(y,x)=\f{1}{y}k_0(\f{x}{y})$ with $k_0(z)=\alpha\delta_{z=\f{1}{\alpha}}.$ We see that $k_0$ satisfies the assumptions of Theorem 2.3.~(b) of~\cite{DE}, namely that it is a singular discrete measure whos support satisfies the Assumption H of~ \cite{DE} - since it is a unique point $\theta=\f{1}{\alpha}$.}
The following asymptotic formula was then obtained in Theorem 2.3. (b) of~\cite{DE} for $x<1$:
\begin{equation}\label{asymp1:v}
v(t, x)=x^{-s_+(t, x)}e^{(\alpha^{2-s_+(t, x)}-1)t}\frac{ \sum\limits_{ k\in \Z }U_0(s_k)
 e^{\frac {2i\pi k } {\log \alpha}\log x}}{{\sqrt{2\pi t} (\log\alpha) \alpha^{1-\f{s_+(t,x)}{2}} }}\left(1+o(t^{-\beta})\right),\end{equation}
for some $\beta>0$ and $s_+(t,x)$ defined by
$$s_+(t,x)=K'^{-1}\left(\f{\log x}{t}\right)=2-\f{\log\biggl(-\f{\log(x)}{t\log\alpha}\biggr)}{\log\alpha},\qquad 
x=e^{-t(\log\alpha)\alpha^{2-s_+}},\qquad
 s_k=s_+{-}\f{2ik\pi}{\log \alpha}.$$ 
Using the Poisson formula,  it was also noticed in Remark~3 of~\cite{DE} that it gives
\begin{equation}\label{asymp2:v}\begin{array}{ll}
v(t,x) &=  e^{(\alpha^{2-s_+(t, x)}-1)t} \frac{ \sum\limits_{ n\in \Z } u_0(\alpha^n x) \alpha^{s_+ n}}{{\sqrt{2\pi t}  \alpha^{1-\f{s_+(t,x)}{2}} }}\left(1+o(t^{-\beta})\right). 
\end{array}
\end{equation}
By a straightforward calculation, we can use this formula to obtain the asymptotic formulae for the general case $b,g>0,$ by the transformation $u(t,x)=e^{-gt}v(bt,xe^{-gt}).$ We take however here $b=1$ for the sake of simplicity, and still denote in short $s_+$ the function now defined in $s_+(t,xe^{-gt}).$ We have
\begin{equation}\label{mellin:asymp:growthfrag}\begin{array}{l}
u(t,x)\sim x^{-s_+(t, xe^{-gt})}e^{(\alpha^{2-s_+(t, xe^{-gt})}-1+g(s_+-1))t}\frac{ \sum\limits_{ k\in \Z }U_0(s_k)
 e^{\frac {2i\pi k } {\log \alpha}\log x}}{{\sqrt{2\pi t} (\log\alpha) \alpha^{1-\f{s_+(t,xe^{-gt})}{2}} }}(1+o(t^\beta)),
\\ \\
u(t,x) \sim  e^{(\alpha^{2-s_+(t, xe^{-gt})}-1-g)t} \frac{ \sum\limits_{ n\in \Z } u_0(\alpha^n xe^{-gt}) \alpha^{s_+ n}}{{\sqrt{2\pi t}  \alpha^{1-\f{s_+(t,xe^{-gt})}{2}} }}.
\end{array}
\end{equation}
Despite its resemblance with~\eqref{def:frag}, no immediate link appears. 

\subsection{Weak convergence result}
Can we use the formula~\eqref{asymp1:v} or~\eqref{asymp2:v} to make appear an oscillatory asymptotic behaviour? 
Following~\cite{MR2017852}, let us first focus on the following rescalings of $v:$
$$r(t,y):=te^{2ty}v(t,e^{ty})=t e^{2ty+gt} u(\f{t}{b},e^{(y+g)t}) ,\qquad y_0:=-\log\alpha,$$
$$\tilde r(t,z)=r(t,y_0+\f{\sigma z}{\sqrt{t}})\f{\sigma}{\sqrt{t}}, \qquad \qquad \sigma^2=K''(y_0)=(\log\alpha)^2.$$
Such a rescaling is motivated by the fact that the lines $x=e^{ty},$ with $y<0$ constant, correspond to the lines $s_+(t,x)$ constant in time, leading to a given asymptotic profile in~\eqref{asymp1:v} or~\eqref{asymp2:v}. Moreover it is such that the integral of $r$ and $\tilde r$ is preserved, \emph{i.e.} 
$$\int\limits_{-\infty}^\infty  r(t,y) dy = \int\limits_{-\infty}^\infty \tilde r(t,y) dy=\int\limits_0^\infty x v(t,x) dx=\int\limits_0^\infty x u_0(x) dx,\qquad \forall\; t\geq 0.$$
Theorem~1 in~\cite{MR2017852} apparently contradicts any oscillatory behaviour by stating the following weak convergence result, for which we sketch below an alternative proof using Formula~\eqref{asymp1:v}.
 \begin{prop}[Specific case of Theorem~1 in~\cite{MR2017852}]
Let  $u_0\in {\cal C}^2_0(\R_+)$.
$$r(t,\cdot) \rightharpoonup \delta_{-\log\alpha}U_0(2),\qquad
\tilde r(t,\cdot) \rightharpoonup U_0(2)G,$$ with $G(z)=\f{e^{-\f{\cdot^2}{2}}}{\sqrt{2\pi} }$ in a weak sense: for any bounded $C^1$ function  $\phi$ on $\R,$ we have
$$\int\limits_{-\infty}^{+\infty} \phi(y)r(t,y)dy \to U_0(2) \phi(-\log\alpha)\; \hbox{{\text{and}}}\; \int\limits_{-\infty}^{+\infty} \phi(z)\tilde r(t,z)dz \to U_0(2) \int\limits_{-\infty}^{+\infty} \phi(z) \f{e^{-\f{z^2}{2}}}{\sqrt{2\pi} }dz,$$
with $U_0(2)=\int\limits_0^\infty x u_0(x)dx$ the initial mass and $K'(2)=\log\alpha$. 
\end{prop}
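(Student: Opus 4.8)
The plan is to feed the asymptotic expansion~\eqref{asymp1:v} into the definition of $r$. Along a ray $x=e^{ty}$ with $y<0$ one has $\frac{\log x}{t}=y$, so $s_+(t,x)=K'^{-1}(y)=:s_+(y)$ and $s_k(t,x)=:s_k(y)=s_+(y)-\frac{2ik\pi}{\log\alpha}$ are independent of $t$; using $K'(s_+(y))=y$, i.e. $\alpha^{2-s_+(y)}\log\alpha=-y$, Formula~\eqref{asymp1:v} becomes
\begin{equation}\label{eq:rh}
r(t,y)=\sqrt{\tfrac{t}{2\pi}}\;\frac{e^{t\,h(y)}}{(\log\alpha)\,\alpha^{1-s_+(y)/2}}\sum_{k\in\Z}U_0\!\left(s_k(y)\right)e^{\frac{2i\pi k t y}{\log\alpha}}\bigl(1+o(t^{-\beta})\bigr),\qquad h(y):=(2-s_+(y))\,y+\alpha^{2-s_+(y)}-1.
\end{equation}
The function $h$ controls everything. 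Differentiating and using $\alpha^{2-s_+(y)}\log\alpha=-y$ again gives $h'(y)=2-s_+(y)$ and $h''(y)=-s_+'(y)=-1/K''(s_+(y))$; since $K'(2)=-\log\alpha=y_0$ we get $s_+(y_0)=2$, hence $h(y_0)=0$, $h'(y_0)=0$ and $h''(y_0)=-1/K''(2)=-1/(\log\alpha)^2=-1/\sigma^2$, while $h<0$ on $(-\infty,0)\setminus\{y_0\}$ with $h(y)\to-\infty$ as $y\to-\infty$. So $y_0$ is the unique maximum of $h$, $e^{t h(y)}$ is a Gaussian bump of height $1$ and width $\sim\sigma/\sqrt t$ centred at $y_0$, and this is precisely why the scaling $y=y_0+\sigma z/\sqrt t$ used to build $\tilde r$ is the correct one.

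For the first convergence I would test~\eqref{eq:rh} against a bounded $C^1$ function $\phi$ and split the series into the $k=0$ term and the rest. For $k=0$, Laplace's method gives $\int g(y)\,e^{t h(y)}\,dy\sim g(y_0)\,\sigma\sqrt{2\pi/t}$, and since the amplitude $\frac{\phi(y)U_0(s_+(y))}{(\log\alpha)\,\alpha^{1-s_+(y)/2}}$ equals $\frac{\phi(y_0)U_0(2)}{\log\alpha}$ at $y_0$ (using $\alpha^{1-s_+(y_0)/2}=1$), the front factor $\sqrt{t/2\pi}$ cancels the $\sqrt{2\pi/t}$ and the $k=0$ term tends to $\frac{\sigma}{\log\alpha}\,U_0(2)\,\phi(y_0)=U_0(2)\,\phi(-\log\alpha)$, since $\sigma=\log\alpha$. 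For $k\neq0$, substituting $y=y_0+\sigma u/\sqrt t$ one sees the amplitude and $e^{t h(y)}\to e^{-u^2/2}$ converge, while $e^{\frac{2i\pi k t y}{\log\alpha}}=e^{-2i\pi kt}\,e^{2i\pi k\sqrt t\,u}$ (using $y_0/\log\alpha=-1$ and $\sigma/\log\alpha=1$) oscillates in $u$ at frequency $\to\infty$, so each such term vanishes by Riemann--Lebesgue --- in fact an explicit complex Gaussian integral gives a bound $O\!\bigl(e^{-2\pi^2 k^2 t}\bigr)$ on it. The interchange of limit and sum over $k$ is legitimate because $|U_0(\nu+i\tau)|=O(|\tau|^{-2})$ for $u_0\in{\cal C}^2_0(\R_+)$ (two integrations by parts in the Mellin integral), so $\sum_k\sup_y|U_0(s_k(y))|<\infty$. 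This gives $r(t,\cdot)\rightharpoonup U_0(2)\,\delta_{-\log\alpha}$. For the second convergence, plugging~\eqref{eq:rh} into $\int\phi(z)\tilde r(t,z)\,dz=\int\phi(z)\,r(t,y_0+\tfrac{\sigma z}{\sqrt t})\,\tfrac{\sigma}{\sqrt t}\,dz$ and running the same analysis --- $t\,h(y_0+\sigma z/\sqrt t)\to-z^2/2$, amplitude $\to U_0(2)/\log\alpha$, the $k\neq0$ terms killed by $e^{2i\pi k\sqrt t z}$ --- yields $U_0(2)\int\phi(z)\,\tfrac{e^{-z^2/2}}{\sqrt{2\pi}}\,dz=U_0(2)\int\phi\,G$.

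The genuine work is technical: justifying that $t\to\infty$ may be passed under the $y$-integral and under the $k$-sum. For the integral this rests on $h$ having $0$ as its unique maximum (so $e^{t h(y)}\le1$ everywhere and is exponentially small away from $y_0$, which controls the tails together with the crude bound $\int|r(t,y)|\,dy\le\|x\,u_0\|_{L^1}$ coming from~\eqref{def:frag} and mass conservation), together with uniformity of the $o(t^{-\beta})$ remainder in~\eqref{asymp1:v} on compact $y$-intervals, which is exactly what the estimates of~\cite{DE} provide; for the $k$-sum it rests on the $|\tau|^{-2}$ decay of $U_0$. The only conceptual point --- and the one that actually refutes an oscillatory limit --- is the cancellation of the $k\neq0$ contributions: on the $1/\sqrt t$-window where the mass of $r$ concentrates, the phase $e^{2i\pi k t y/\log\alpha}$ rotates $\sim k\sqrt t$ times and averages out. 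I note finally that inserting the \emph{explicit} series~\eqref{def:frag} rather than~\eqref{asymp1:v} gives a short probabilistic proof bypassing all of this: swapping sum and integral and changing variables yields, for any bounded measurable $\psi$,
$$\int_{-\infty}^{+\infty}\psi(y)\,r(t,y)\,dy=\int_0^\infty\mathbb{E}\!\left[\psi\!\left(\frac{\log x-N_t\log\alpha}{t}\right)\right]x\,u_0(x)\,dx,\qquad N_t\sim\mathrm{Poisson}(t),$$
so that the law of large numbers $N_t/t\to1$ gives the first claim (take $\psi=\phi$), and the central limit theorem $(t-N_t)/\sqrt t\Rightarrow{\cal N}(0,1)$, applied with $\psi(y)=\phi\!\bigl(\tfrac{\sqrt t\,(y-y_0)}{\sigma}\bigr)$, gives the second, since $\sigma=\log\alpha$.
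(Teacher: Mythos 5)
Your main argument is essentially the paper's own proof: along the rays $x=e^{ty}$ you plug the asymptotic formula \eqref{asymp1:v} into $r$, identify the phase $h=\Psi$ with its unique maximum $0$ at $y_0=-\log\alpha$ and $\Psi''(y_0)=-1/(\log\alpha)^2$, extract the $k=0$ term by Laplace's method (yielding $U_0(2)\phi(-\log\alpha)$ after the factors of $\sigma=\log\alpha$ cancel), and kill the $k\neq0$ terms by the oscillation of $e^{2i\pi kty/\log\alpha}$; the paper does exactly this, handling the tails by invoking Corollary~1 of \cite{DE} as you do. You add two worthwhile pieces of bookkeeping the paper leaves implicit: the $O(|\tau|^{-2})$ decay of $U_0$ on vertical lines justifying the interchange of the $k$-sum with the limit, and the explicit Gaussian bound $O(e^{-2\pi^2k^2t})$ quantifying the stationary-phase cancellation.

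Your closing probabilistic argument, however, is a genuinely different and self-contained route that the paper does not take: substituting the explicit series \eqref{def:frag} and changing variables turns $\int\psi(y)r(t,y)\,dy$ into $\int_0^\infty\mathbb{E}\bigl[\psi\bigl(\tfrac{\log x-N_t\log\alpha}{t}\bigr)\bigr]x\,u_0(x)\,dx$ with $N_t$ Poisson of parameter $t$, so the two claims become the law of large numbers and the central limit theorem for $N_t$. This bypasses the Mellin asymptotics and the delicate $k\neq0$ cancellation entirely, works for merely bounded continuous test functions, and is in fact closer in spirit to Bertoin's original proof of Theorem~1 in \cite{MR2017852}; what it does not give, and what the paper's route via \eqref{asymp1:v} is designed to exhibit, is the pointwise oscillatory structure hidden inside the weak limit, which is the subject of the following subsection.
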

\begin{proof}
First, for $y<0,$ let us denote $s_+(y):=s_+(t,e^{ty})=2-\f{\log (-\f{y}{\log\alpha})}{\log \alpha}$, which is independent of the time $t$. We also notice that $\alpha^{2-s_+(y)}=-\f{y}{\log \alpha}.$

In Corollary~1 of~\cite{DE}, this result has been obtained in the case where the kernel is not singular, so that instead of the infinite sum $\sum\limits_{k\in \Z} U_0 (s_k) e^{\f{2i\pi k}{\log\alpha}yt}$ there was only the term $U_0(s_+(y)).$ Hence to prove the result, it only remains to show that the terms with $U_0(s_k)$ vanish for $k\neq 0.$ Under our simpler assumption of $u_0\in {\cal C}^2_0(\R_+)$, we have for any continuous and bounded  test function $\phi (y)$ with $y\in \R$:
$$\int\limits_{-\infty}^{+\infty} \phi(y)r(t,y)dy=\int\limits_{-\infty}^{-A} + \int\limits_{-A}^{-\varepsilon} +\int\limits_{-\varepsilon}^{+\infty} 
 \phi(y)r(t,y)dy.$$
For $\varepsilon>0$ small enough and $A>0$ large enough fixed, the first and the third integrals are estimated as in the proof of Corollary~1 in~\cite{DE} (in the notations of~\cite{DE} we have $p_0=-\infty$ and $q_0=+\infty$ due to the fast decay of $u_0$ in $0$ and $\infty$):  $v(t,x)$ being exponentially decreasing in time on these interval, and using the regularity assumptions on the test function, these integrals go to zero. 
Let us call $I$ the second integral, where the mass concentrates, and use the asymptotic behaviour recalled above:
 \begin{equation*}
\begin{array}{ll}
I&:= \int\limits_{-A}^{-\varepsilon}  \phi(y)r(t,y)dy\\ \\&= \left(1+o(t^{-\beta})\right)\int\limits_{-A}^{-\varepsilon}\phi(y) t e^{2ty}
e^{-s_+(y)y t}e^{(\alpha^{2-s_+(y)}-1)t}\frac{ \sum\limits_{ k\in \Z }U_0\left(s_+(y) + \f{2ik\pi}{\log\alpha}\right)
 e^{\frac {2i\pi k } {\log \alpha}ty}}{{\sqrt{-2\pi t (\log\alpha)y}}} dy
\\ \\
&= \left(1+o(t^{-\beta})\right)\int\limits_{-A}^{-\varepsilon}\phi(y) t 
e^{\f{\log(-\f{y}{\log\alpha})}{\log \alpha} y t}e^{(-\f{y}{\log\alpha}-1)t}\frac{ \sum\limits_{ k\in \Z }U_0\left(s_+(y) + \f{2ik\pi}{\log\alpha}\right)
 e^{\frac {2i\pi k } {\log \alpha}ty}}{{\sqrt{-2\pi t (\log\alpha)y}}} dy.
\\ \\
&=\left(1+o(t^{-\beta})\right)\sum\limits_{k\in \Z} I_k.
\end{array}
\end{equation*}
The term for $k=0$ is the same as in Corollary~1 in~\cite{DE}:
$$\begin{array}{ll}I_0
&= \left(1+o(t^{-\beta})\right)\int\limits_{-A}^{-\varepsilon} \phi(y) \sqrt{t} e^{\Psi(y) t} \f{U_0\big(s_+(y)\big) }{\sqrt{-2\pi (\log\alpha)y}}dy \to_{t\to\infty} \phi(-\log\alpha)U_0(2),
\end{array}$$
using Laplace's method and the fact that
$$\Psi(y):=\f{\log(-\f{y}{\log\alpha})}{\log\alpha} y -\f{y}{\log\alpha} -1,\qquad \Psi'(y)=\f{\log(-\f{y}{\log\alpha})}{\log\alpha},\qquad \Psi''(y)=\f{1}{y\log\alpha}$$
 has a unique maximum at $y_0=K'(2)=-\log\alpha$, with $\Psi(-\log\alpha)=0$ and $\Psi''(-\log\alpha)=-\f{1}{(\log\alpha)^2}.$ For the terms $I_k$ with $k\neq 0$ we have
$$\begin{array}{ll}I_k&= \left(1+o(t^{-\beta})\right)
\int\limits_{-A}^{-\varepsilon}\phi(y) t e^{2ty}
e^{-s_+(t, e^{ty})y t}e^{(\alpha^{2-s_+(t, e^{ty})}-1)t}\frac{ U_0\left(s_+(t,e^{ty})+\f{2ik\pi}{\log\alpha}\right) e^{\frac {2i\pi k } {\log \alpha}ty}
 }{{\sqrt{-2\pi t (\log\alpha)y}}} dy\\ \\
&= \left(1+o(t^{-\beta})\right)\int\limits_{-A}^{-\varepsilon} \phi(y)\sqrt{t} e^{(\Psi(y) +\f{2ik\pi}{\log\alpha} y)t} \f{U_0\big(s_+(y)+\f{2ik\pi}{\log\alpha}\big) }{\sqrt{- 2\pi (\log\alpha) y}}dy \to_{t\to\infty} 0,
\end{array}$$
by  the stationary phase  approximation. The proof for convergence of $\tilde r$ is similar.
\end{proof}

\subsection{Pointwise oscillatory asymptotic behaviour}
Despite its seemingly contradiction, we see by the above proof that the steady convergence obtained does not necessarily contradict pointwise oscillations:  a weak convergence may  happen even for pointwise oscillatory solutions, oscillatory terms compensating each other when averaged by integration. 

The weak convergence results above have shed light on the line where the mass concentrates: the line $x=e^{ty_0}=\alpha^{-t}.$ Let us first keep the above seen change of variables $r(t,y)=te^{2ty}v(t,e^{ty})$, Formulae~\eqref{asymp1:v} and~\eqref{asymp2:v} become,  for $y=y_0-\log\alpha:$ 
\begin{equation*}
r(t,y_0)=t\alpha^{-2t} v(t, \alpha^{-t})= \sqrt{\f{t}{2\pi}}{ \sum\limits_{ k\in \Z }U_0(2+\f{2ik\pi}{\log\alpha})
 e^{-{2i\pi k }t}}\left(1+o(t^{-\beta})\right),\end{equation*}
or, using the Poisson formula
\begin{equation*}\begin{array}{ll}
t\alpha^{-2t} v(t,\alpha^{-t}) &= t \alpha^{-2t} \frac{ \sum\limits_{ n\in \Z } u_0(\alpha^{n-t}) \alpha^{2n}}{{\sqrt{2\pi t}  }}\left(1+o(t^{-\beta})\right) =  \sqrt{\f{t}{2\pi}}  { \sum\limits_{ n\in \Z } u_0(\alpha^{n-t}) \alpha^{2(n-t)}}\left(1+o(t^{-\beta})\right).
\end{array}
\end{equation*}
These two formulae make obvious the periodic behaviour, of period $T=1,$ of the quantity $\f{r(t,y)}{\sqrt{t}}$.
More generally, for a given $y<0$ fixed,  these two formulae also show that the function 
$$f_y(t)=\sqrt{t}e^{2ty-\Psi(y) t}v(t,e^{yt})$$ is periodic in time of period $T_y=-\f{\log\alpha}{y}.$ Since the exponential term $e^{\Psi(y)t}$ is maximal for $y=-\log\alpha,$ the line $(t,\alpha^{-t})$ dominates all the others - what explains the weak convergence result - but each of these lines follow a specific type of periodicity. The period is larger when $|y|$ is smaller, which corresponds to the lines $x=e^{yt}$ going more slowly to zero - other said, to the righ-hand side of the gaussian in $y$ (see also Figure~\ref{fig:2}). This is explained by the fact that the gaussian becoming wider and wider since its standard deviation is proportional to $\sqrt{t},$ the periodicity needs to be faster in the forefront (left-hand side of the gaussian in Figure~\ref{fig:2}) and slower after.
\subsection{Numerical illustration}
To simulate more easily the asymptotic behaviour, we define
$$n(t,y)=e^{2y}v(t,e^y),$$
which satisfies the following equation
\begin{equation}
\f{\p}{\p t} n(t,y) + n(t,y)=n(t,y+\log \alpha),\qquad n(0,y)=e^{2y}u_0(e^y).
\end{equation}
We choose as an initial condition for $n(0,y)$ a gaussian of mean zero and variance $\sigma^2.$ For $\sigma$ large, we do not observe any oscillations - exactly as in the previous numerical illustration of~\cite{DE} where we did not pay attention to the oscillatory phenomena. But for $\sigma$ small enough, clear oscillations appear and illustrate exactly the results.
In Figure~\ref{fig:1}, we take $\sigma=0.1$, $\alpha=2$ and draw the numerical value of $\sqrt{t} n(t,-t\log(2)),$  $\sqrt{t} n(t,-2t\log (2))e^{(2\log (2)-1)t}$ and $\sqrt{t} n(t,-\f{t}{2}\log (2))e^{\f{1}{2}(1-\log 2)t}$ which as expected exhibit oscillations of period $1$, $\f{1}{2}$ and $2$ respectively. In Figure~\ref{fig:2}, we show the time evolution of the rescaled profile $\sqrt{t}n(t,y):$ we clearly see the envelope shape of a gaussian appear and become wider and wider, whereas equally-wide peaks are inside the gaussian. { To illustrate the importance of the initial condition, we take in Figure~\ref{fig:3} and~\ref{fig:4} the same quantities with the same parameter values, except the standard deviation, there equal to 0.2. In Figure~\ref{fig:5} we took $\sigma=0.5:$ no oscillation is visible anymore. The shape of the initial condition has also an influence, as illustrated in Figures~\ref{fig:6} and~\ref{fig:7} where we took a Heaviside function in $[0.8,1]$: the shape is conserved when the profile oscillates. Due to the nonlinearity of the initial condition, contrarily to the gaussian initial data, the oscillations never totally disappear for a larger initial support, as shown in Figures~\ref{fig:8} and~\ref{fig:9} where the support is $[-1,0]$, then Figure~\ref{fig:11} where it is $[-5,0]$.}

\begin{figure}[ht]
\centering
\includegraphics[width=\textwidth]{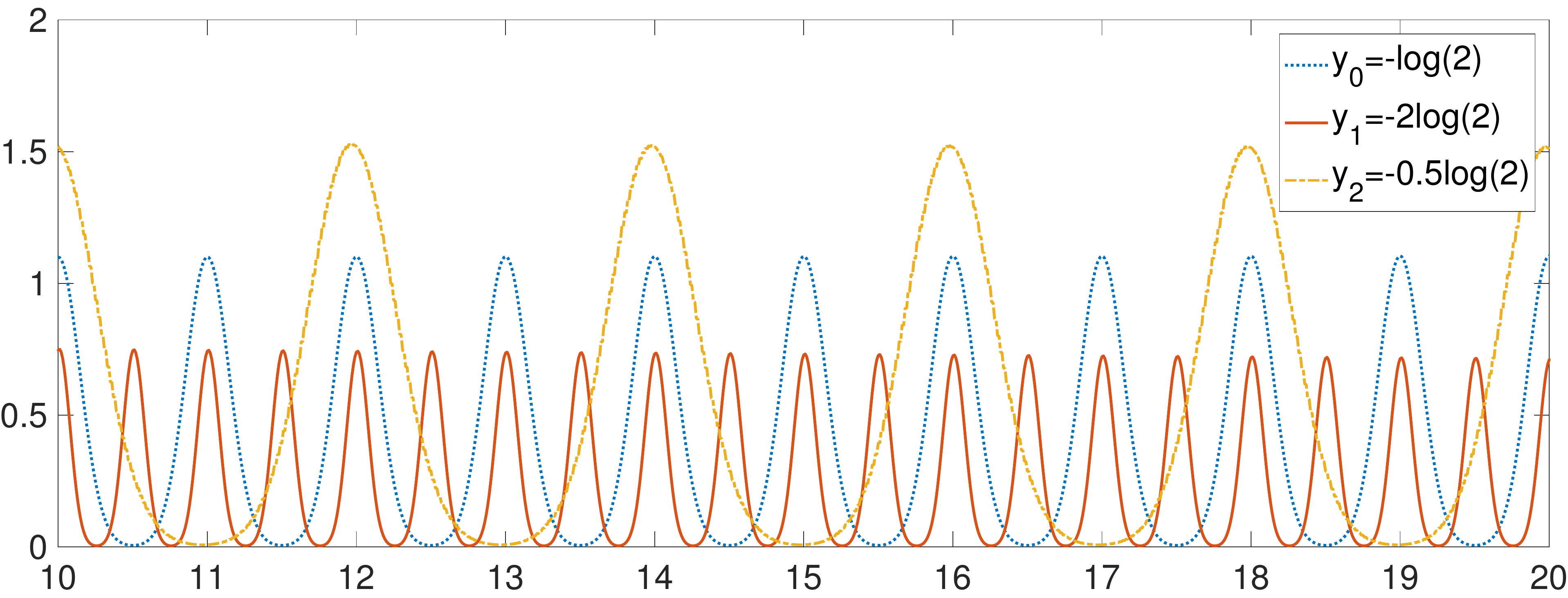}
\caption{\label{fig:1} Numerical simulation for $\alpha=2,$ $n(0,y)$ a gaussian of mean $0$ and standard deviation $0.1$.  Plot of the time variation of the quantity $\sqrt{t} e^{2ty-\Psi(y)t} v(t,e^{yt})=\sqrt{t} e^{-\Psi(y)t} n(t,yt)$ for $y=-\log (2),$ $y=-2\log (2)$ and $y=-0.5\log(2)$.}
\end{figure}

\begin{figure}[ht]
\centering
\includegraphics[width=\textwidth]{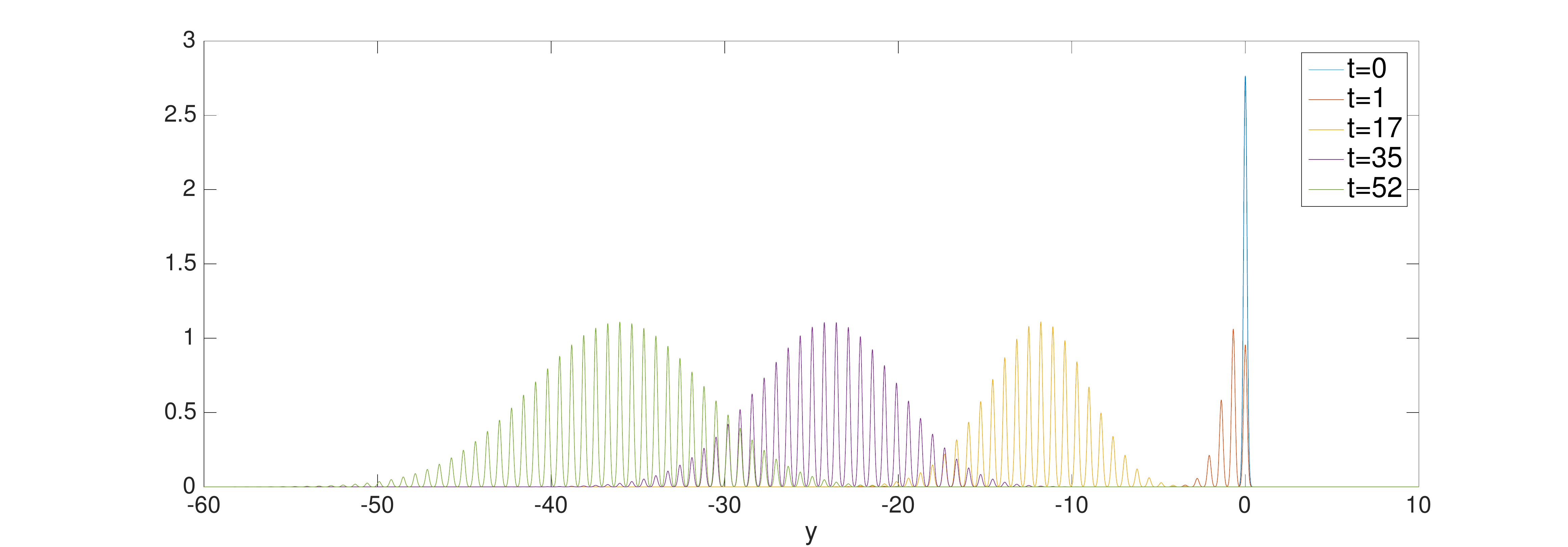}
\caption{\label{fig:2} Numerical simulation for $\alpha=2,$ $n(0,y)$ a gaussian of mean $0$ and standard deviation $0.1$.  Plot of the size-distribution of  $\sqrt{t} n(t,y)=\sqrt{t} e^{2y} v(t,e^{y})$. We see the shape of the gaussian becoming wider and wider, whereas oscillations are maintained.}
\end{figure}

\begin{figure}[ht]
\centering
\includegraphics[width=\textwidth]{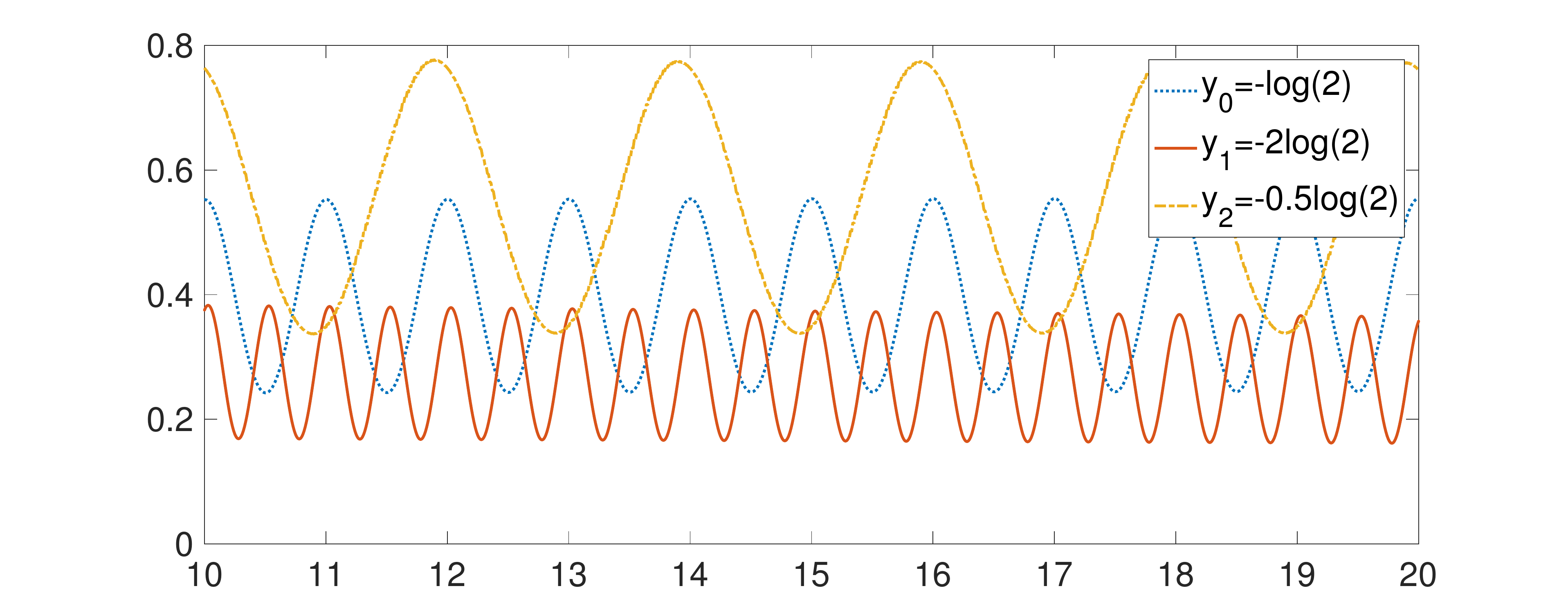}
\caption{\label{fig:3} {Numerical simulation for $\alpha=2,$ $n(0,y)$ a gaussian of mean $0$ and standard deviation $0.2$.  Plot of the time variation of the quantity $\sqrt{t} e^{2ty-\Psi(y)t} v(t,e^{yt})=\sqrt{t} e^{-\Psi(y)t} n(t,yt)$ for $y=-\log (2),$ $y=-2\log (2)$ and $y=-0.5\log(2)$.}}
\end{figure}

\begin{figure}[ht]
\centering
\includegraphics[width=\textwidth]{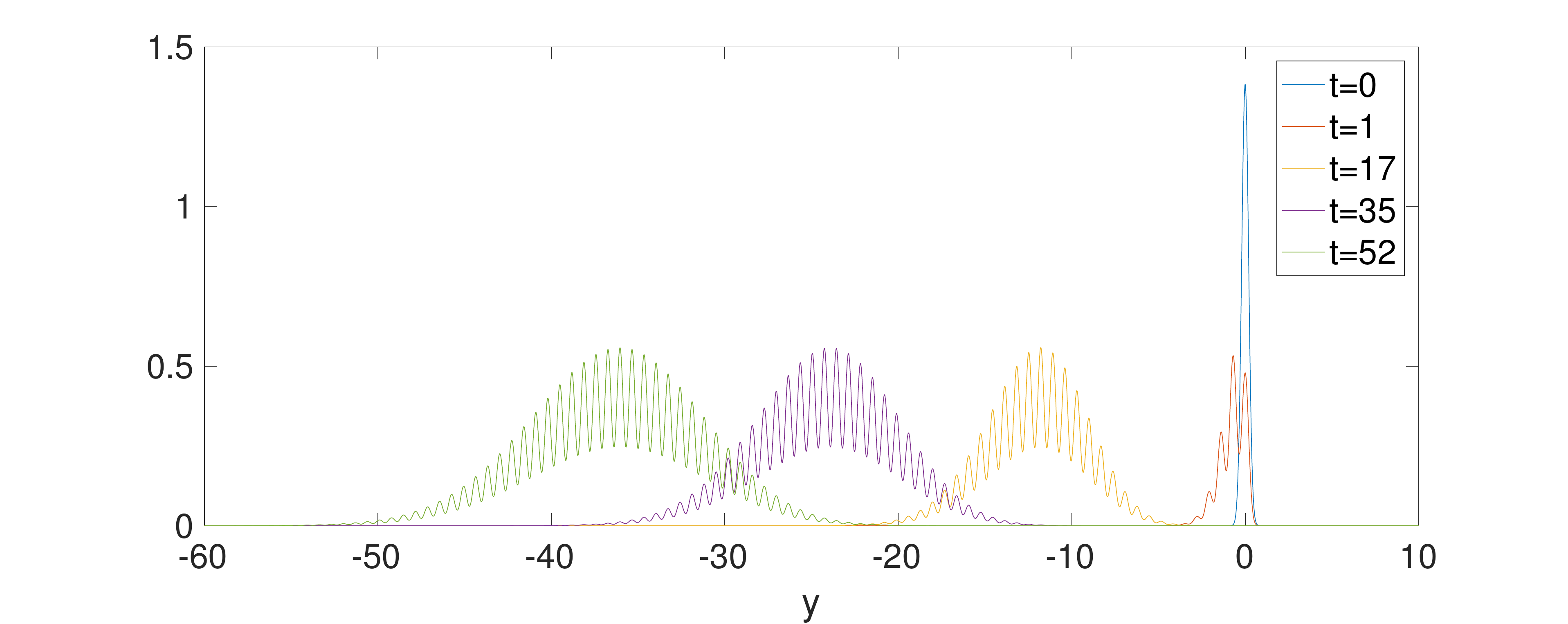}
\caption{\label{fig:4}{ Numerical simulation for $\alpha=2,$ $n(0,y)$ a gaussian of mean $0$ and standard deviation $0.2$.  Plot of the size-distribution of  $\sqrt{t} n(t,y)=\sqrt{t} e^{2y} v(t,e^{y})$. We see the shape of the gaussian becoming wider and wider, whereas oscillations are maintained but smaller than for $\sigma=0.1$.}}
\end{figure}

\begin{figure}[ht]
\centering
\includegraphics[width=\textwidth]{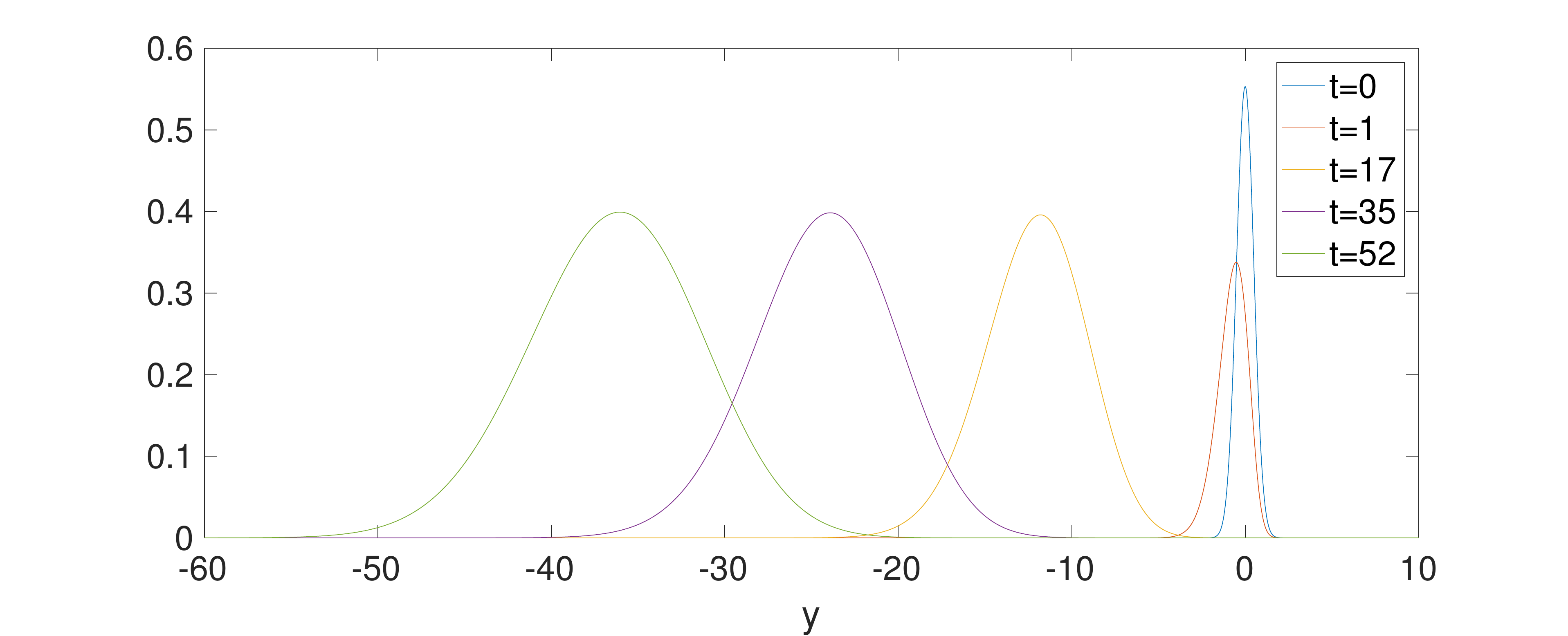}
\caption{\label{fig:5}{ Numerical simulation for $\alpha=2,$ $n(0,y)$ a gaussian of mean $0$ and standard deviation $0.5$.  Plot of the size-distribution of  $\sqrt{t} n(t,y)=\sqrt{t} e^{2y} v(t,e^{y})$. We see the shape of the gaussian becoming wider and wider, and no oscillation is anymore visible.}}
\end{figure}

\begin{figure}[ht]
\centering
\includegraphics[width=\textwidth]{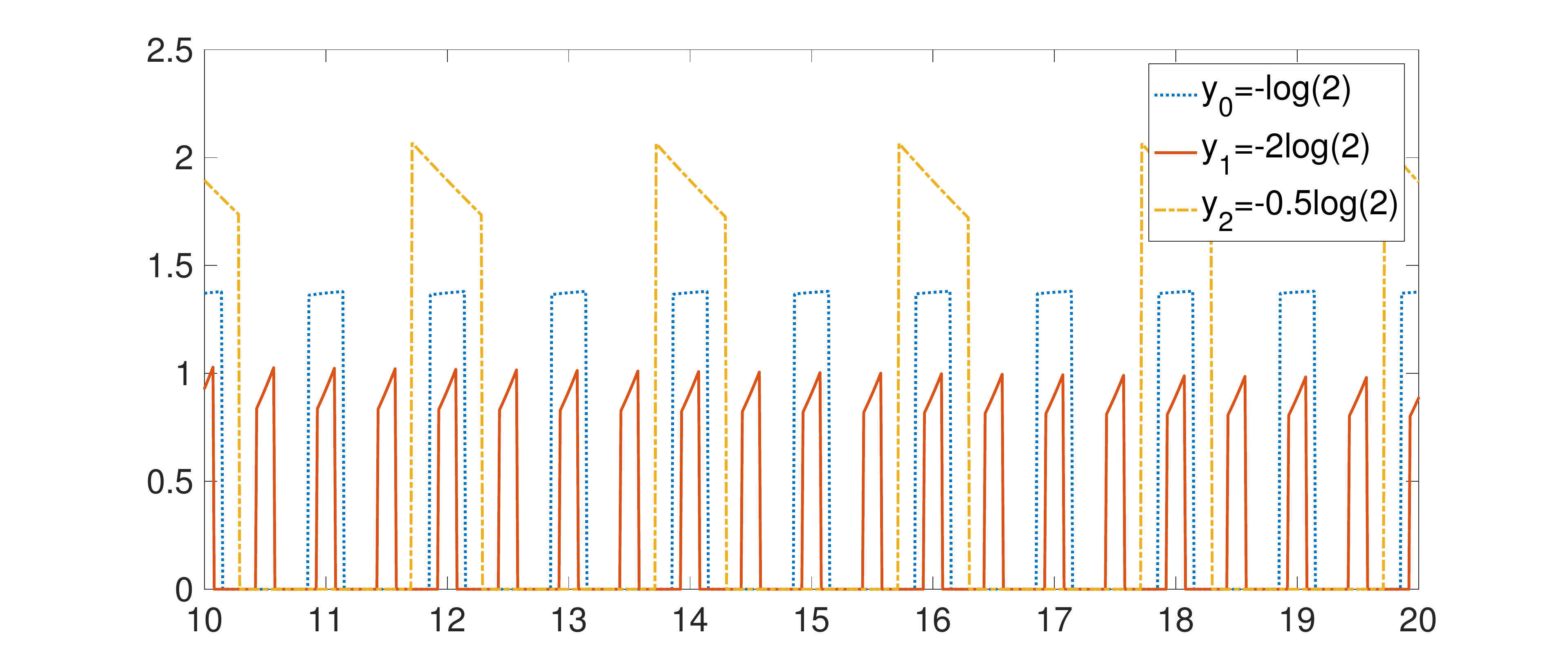}
\caption{\label{fig:6} {Numerical simulation for $\alpha=2,$ $n(0,y)$ a Heaviside on $[-0.2,0]$.  Plot of the time variation of the quantity $\sqrt{t} e^{2ty-\Psi(y)t} v(t,e^{yt})=\sqrt{t} e^{-\Psi(y)t} n(t,yt)$ for $y=-\log (2),$ $y=-2\log (2)$ and $y=-0.5\log(2)$.}}
\end{figure}

\begin{figure}[ht]
\centering
\includegraphics[width=\textwidth]{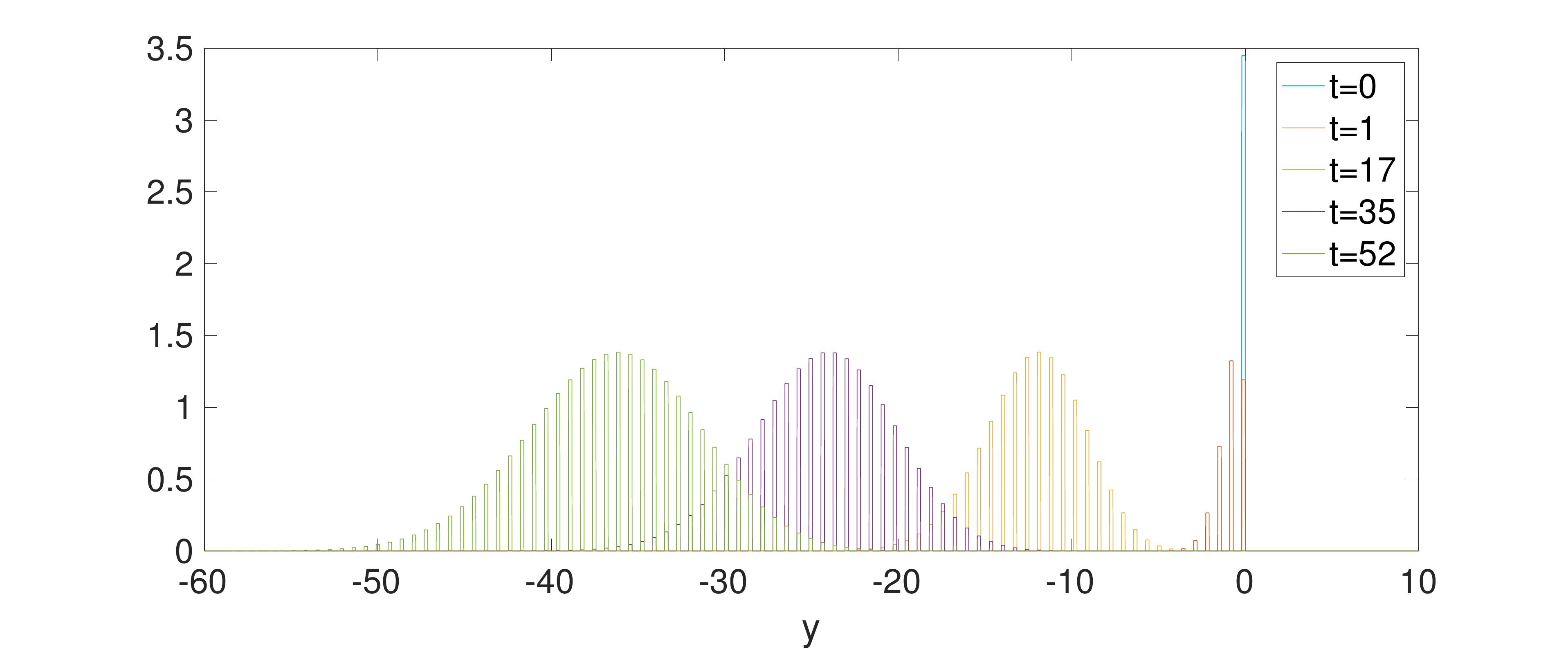}
\caption{\label{fig:7} { Numerical simulation for $\alpha=2,$ $n(0,y)$ a Heaviside on $[-0.2,0].$  Plot of the size-distribution of  $\sqrt{t} n(t,y)=\sqrt{t} e^{2y} v(t,e^{y})$. We see the shape of the gaussian becoming wider and wider, whereas oscillations are maintained and keep the shape of the Heaviside.}}
\end{figure}

\begin{figure}[ht]
\centering
\includegraphics[width=\textwidth]{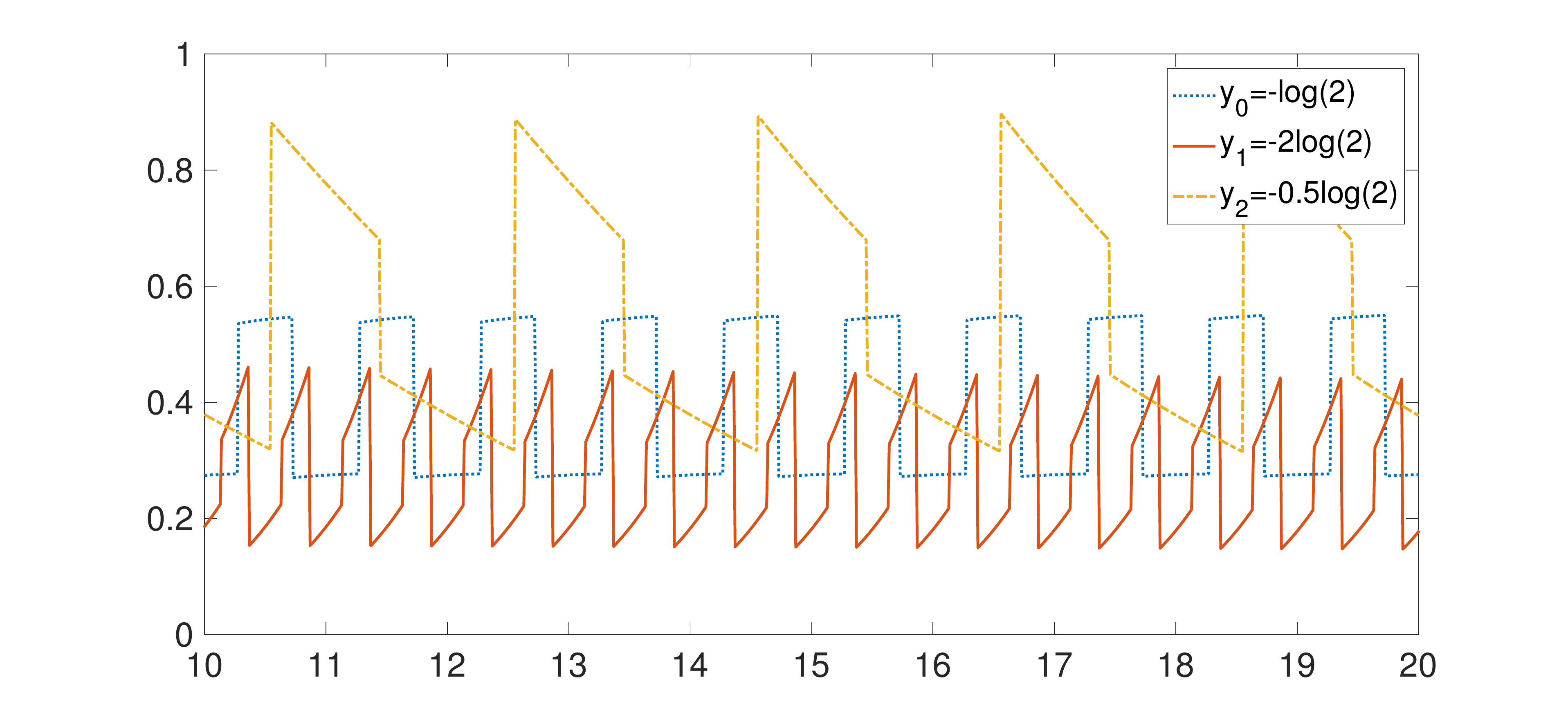}
\caption{\label{fig:8} {Numerical simulation for $\alpha=2,$ $n(0,y)$ a Heaviside on $[-1,0]$.  Plot of the time variation of the quantity $\sqrt{t} e^{2ty-\Psi(y)t} v(t,e^{yt})=\sqrt{t} e^{-\Psi(y)t} n(t,yt)$ for $y=-\log (2),$ $y=-2\log (2)$ and $y=-0.5\log(2)$.}}
\end{figure}

\begin{figure}[ht]
\centering
\includegraphics[width=\textwidth]{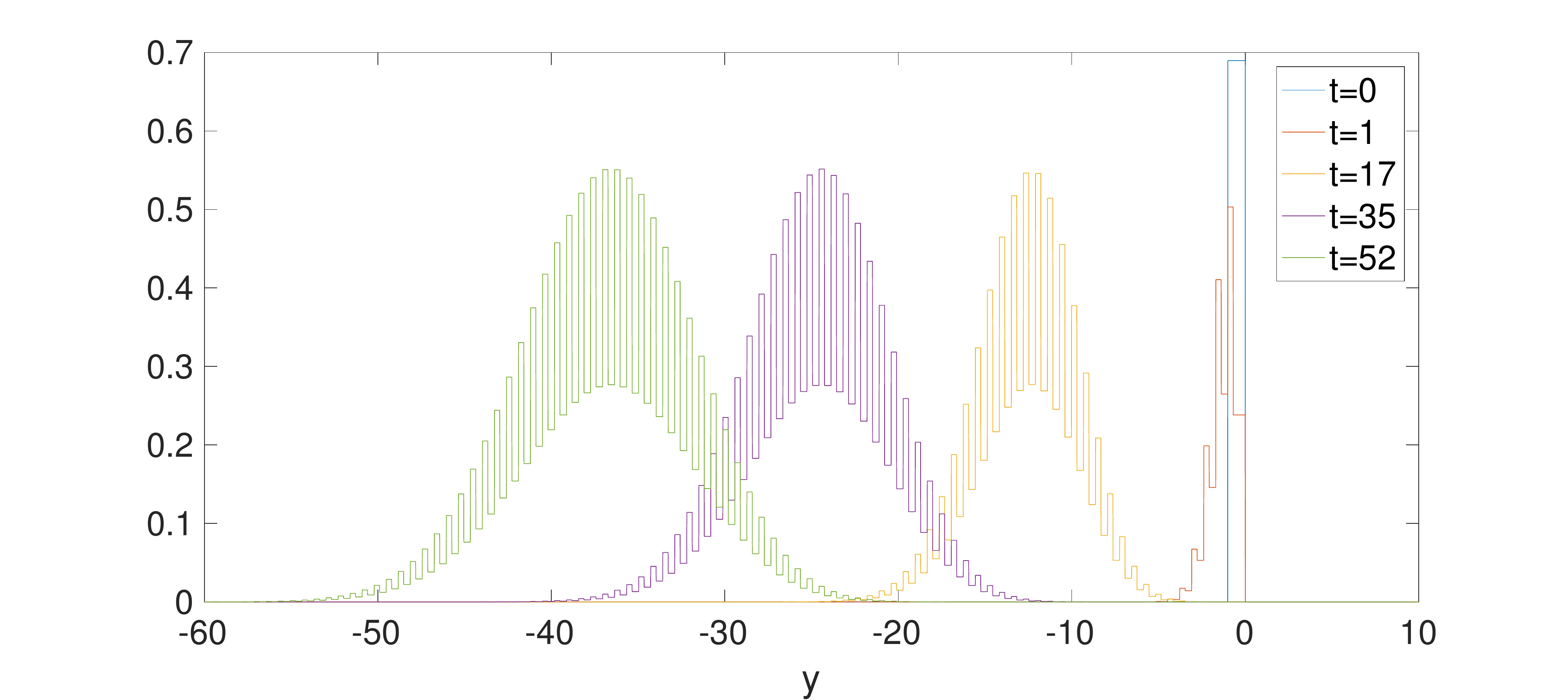}
\caption{\label{fig:9} { Numerical simulation for $\alpha=2,$ $n(0,y)$ a Heaviside on $[-1,0].$  Plot of the size-distribution of  $\sqrt{t} n(t,y)=\sqrt{t} e^{2y} v(t,e^{y})$. We see the shape of the gaussian becoming wider and wider, whereas oscillations are maintained (though smaller than for more peaked initial data) and keep the shape of the Heaviside.}}
\end{figure}

\begin{figure}[ht]
\centering
\includegraphics[width=\textwidth]{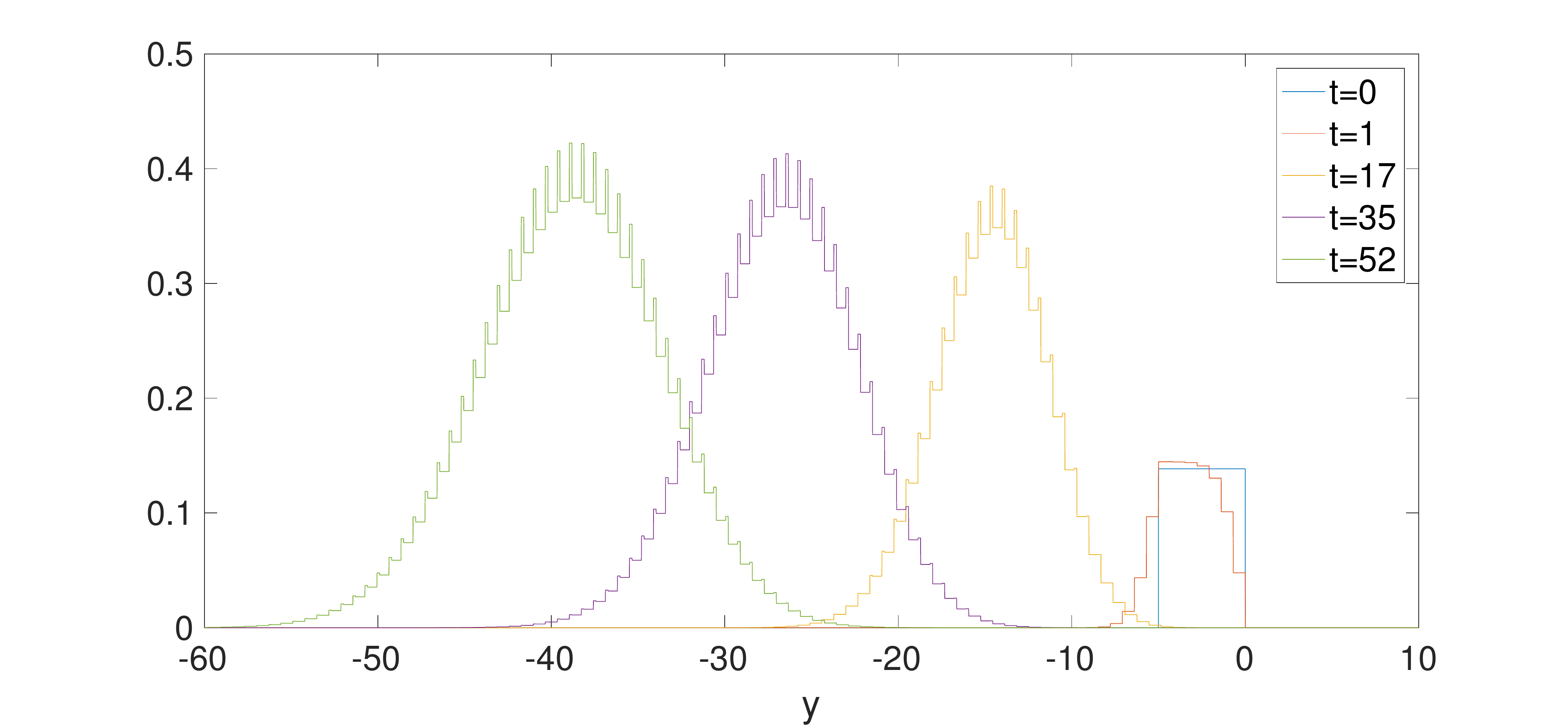}
\caption{\label{fig:11} { Numerical simulation for $\alpha=2,$ $n(0,y)$ a Heaviside on $[-5,0].$  Plot of the size-distribution of  $\sqrt{t} n(t,y)=\sqrt{t} e^{2y} v(t,e^{y})$. We see that the oscillations are maintained (though smaller than for more peaked initial data) and keep the shape of the Heaviside.}}
\end{figure}

\

{\bf Acknowledgments.} M.D. has been supported by the ERC Starting Grant SKIPPER$^{AD}$ (number 306321). The authors thank J. Bertoin, M. Escobedo and P. Gabriel for illuminating discussions, and M. Dauhoo, L. Dumas and P. Gabriel for the opportunity to work together at the CIMPA school in Mauritius.

\end{document}